 \patchcmd\Gread@eps{\@inputcheck#1 }{\@inputcheck"#1"\relax}{}{}
\newtheorem{theorem}{Theorem}[section]
\newtheorem{lemma}[theorem]{Lemma}
\newtheorem{remark}[theorem]{Remark}
\definecolor{light-gray}{gray}{0.95}
\def\centerarc[#1](#2)(#3:#4:#5){\draw[#1] ($(#2)+({#5*cos(#3)},{#5*sin(#3)})$) arc (#3:#4:#5);}
\newcommand{\vertiii}[1]{{\left\vert\kern-0.25ex\left\vert\kern-0.25ex\left\vert #1 
    \right\vert\kern-0.25ex\right\vert\kern-0.25ex\right\vert}}
\numberwithin{equation}{section}
\numberwithin{figure}{section}
\newcommand{\bb}[1]{{\mathbb #1}}
\newcommand{\<}{\big\langle}
\renewcommand{\>}{\big\rangle}
\renewcommand{\epsilon}{\varepsilon}
\newcommand{\R}{\mathbb R}
\renewcommand{\P}{\mathbb P}
\newcommand{\E}{\mathbb E}
\newcommand{\En}[1]{\lim_{N \rightarrow \infty}  \bb{E}^N_{\mu_N} \Big[ \big| \int_0^t #1 ds \big|\Big]}
\begin{document}
	
\title{Long-time behavior of SSEP with slow boundary}

\author{Linjie Zhao}
\address{Inria, Univ. Lille, CNRS, UMR 8524 - Laboratoire Paul Painlev\'e, F-59000 Lille}
\email{linjie.zhao@inria.fr}

\keywords{Exclusion process; slow boundary; empirical measure, law of large numbers}

\begin{abstract}
We consider the symmetric simple exclusion process with slow boundary first introduced in [Baldasso {\it et al.}, Journal of Statistical Physics, 167(5), 2017]. We prove a law of large number for the empirical measure of the process under a longer time scaling instead of the usual diffusive time scaling.
\end{abstract}

\maketitle

\section{Introduction}

Interacting particle systems in contact with reservoirs have been investigated in various literature  \cite{de2011current,Erignoux18,eyink1990hydrodynamics}.  We study in this article the symmetric simple exclusion process (SSEP) on a line segment $\{1,\ldots,N-1\}$  with slow boundary, which is first introduced  by Baldasso {\it et al.} \cite{baldasso2017exclusion}.   Here, $N$ is the scaling parameter. There is at most one particle per site.  In the bulk, a particle jumps to one of its neighbors at rate one provided the target site is empty. Fix parameters $c > 0, \theta \geq 0$ and $\alpha, \, \beta \in(0,1)$.
At the boundary site $1$ (resp. $N-1$), a particle is created at rate $c \alpha N^{-\theta}$ (resp. $c \beta N^{-\theta}$)  if site  $1$ (resp. $N-1$) is empty, and a particle is destroyed at rate $c (1-\alpha) N^{-\theta}$ (resp. $c(1- \beta) N^{-\theta}$)  if site  $1$ (resp. $N-1$) is occupied.  Therefore, the particle  density of the left (resp. right) reservoir  is $\alpha$ (resp. $\beta$), and the interaction strength between the bulk and the reservoirs is $cN^{-\theta}$. The hydrodynamic equation of the model turns out to be the heat equation with Dirichlet boundary conditions if $\theta < 1$, with Robin boundary conditions if $\theta = 1$ and with Neumann boundary conditions if $\theta > 1$. We refer the readers to \cite{baldasso2017exclusion} for more background of the model. 

The hydrodynamic limit of the model is considered under the diffusive time scaling, i.e., with time speeded up by $N^2$ and space divided by $N$. The aim of this article is to consider the behavior of the process under a longer time scaling $N^{2+\gamma},\,\gamma > 0$.  Since the process is irreducible,  it has a unique invariant measure. Under the invariant measure, the empirical measure converges in probability  to the stationary solution of the corresponding hydrodynamic equations as $N \rightarrow \infty$. This is called hydrostatic limit  \cite{baldasso2017exclusion,tsunoda2020hydrostatic}.  The hydrostatic limit could be formally interpreted as taking  $\gamma = \infty$.  For $0 < \gamma < \infty$, it is natural to expect that the  limit of the empirical measure should coincide with the hydrostatic limit. This is indeed true for $\theta \leq 1$, since the stationary solution of the corresponding hydrodynamic equation is unique in this case. For $\theta > 1$, since the stationary solution is not unique, three regimes appear depending on whether $\gamma < \theta - 1$, $\gamma = \theta - 1$ or $\gamma > \theta - 1$.  See Theorem \ref{thm:longtime} for details.

Despite the simple structure of the model, it has attracted a lot of attention since then.  The equilibrium/non-equilibrium fluctuations from the hydrodynamic limit are considered in \cite{franco2019non,gonccalves2020non}.  The large deviation of the SSEP with slow boundary are investigated in \cite{bouley2021steady,derrida2021large,franco2021large}.

The paper is organized as follows. In Section \ref{sec:notation} we define the model rigorously via its infinitesimal generator, review the hydrodynamic/hydrostatic limit already proven in \cite{baldasso2017exclusion,tsunoda2020hydrostatic}, and state the main result of the article. In Section \ref{sec:preliminary} we introduce the notation of Dirichlet forms and prove the so-called replacement lemmas under a longer time scaling. The estimates involving the Dirichlet forms are mostly borrowed from \cite{baldasso2017exclusion}. The proof of Theorem \ref{thm:longtime} is presented in Section \ref{sec:proof}.

\section{Notation and Results}\label{sec:notation}

\subsection{The model.}  The state space of the process $(\eta_t)_{t \geq 0}$ is $\Omega_N:= \{0,1\}^{I_N}$, where $I_N := \{1,\ldots,N-1\}$ . Here, $N$ is the scaling parameter. For a configuration $\eta \in \Omega_N$, $\eta (x) = 1$ if and only if there is a particle at site $x$. Fix parameters $c > 0$, $\theta \geq 0$ and $\alpha, \beta \in (0,1)$.  The parameter $\theta$ denotes the strength of  interaction with reservoirs and $\alpha,\,\beta$ are the particle densities of reservoirs.   The generator  $L_N$  of the process $(\eta_t)_{t \geq 0}$ is given by
\[L_{N}=L_{N, 0}+L_{N, b}^{\alpha}+L_{N, b}^{\beta}.\]
Above,  the generator $L_{N,0}$ of the bulk dynamics acting on functions $f:\Omega_N \rightarrow \R$ is 
\[	\left(L_{N, 0} f\right)(\eta)=\sum_{x=1}^{N-2}\left[f\left(\eta^{x, x+1}\right)-f(\eta)\right],\]
where $\eta^{x,y}$ is the configuration obtained from $\eta$ by exchanging the values of $\eta(x)$ and $\eta (y)$, i.e., $\eta^{x,y}(x) = \eta (y)$, $\eta^{x,y} (y) = \eta (x)$ and $\eta^{x,y} (z) = \eta (z)$ for $z \neq x,\,y$.
The generators $L_{N, b}^{\alpha}$ and $L_{N, b}^{\beta}$  correspond to the boundary effects, and are given by
\begin{align*}
	(L_{N, b}^{\alpha} f)(\eta):=c N^{-\theta} r_{\alpha}(\eta)\left[f\left(\eta^{1}\right)-f(\eta)\right], \quad 
	(L_{N, b}^{\beta} f)(\eta):=c N^{-\theta} r_{\beta}(\eta)\left[f\left(\eta^{N-1}\right)-f(\eta)\right],
\end{align*}
where  $\eta^x$ is the configuration obtained from $\eta$ by flipping the value of $\eta(x)$, i.e., $\eta^x (x) = 1 - \eta (x)$ and $\eta^x (z) = \eta (z)$ for $z \neq x$, and
\[r_{\alpha}(\eta)=\alpha(1-\eta(1))+(1-\alpha) \eta(1),\quad r_{\beta}(\eta)=\beta(1-\eta(N-1))+(1-\beta) \eta(N-1).\]

Denote by $\mu_N$ the initial measure of the process. For any positive integer $k$, let $C^k [0,1]$ be the family of functions on $[0,1]$ such that the $m$-th derivative is uniformly continuous in $(0,1)$ for any $m \leq k$.

\subsection{Hydrodynamic limit.}  It has been proven in \cite{baldasso2017exclusion} that phase transitions occur for the SSEP with slow boundary, depending on whether $\theta < 1$, $\theta = 1$ or $\theta > 1$.  To state the hydrodynamic limit, we impose the following assumptions on the initial measure $\mu_N$: there exists a measurable initial density profile $\rho_0: [0,1] \rightarrow [0,1]$ such that  for any $G \in C[0,1]$, 
\[\lim_{N \rightarrow \infty} \frac{1}{N} \sum_{x=1}^{N-1}  \eta (x) G \big(\tfrac{x}{N}\big) = \int_0^1 \rho_0 (u) G(u)\,du \]
 in probability with respect to $\mu_N$.  The following result characterize the macroscopic density profile under the diffusive time scaling.
 
 \begin{theorem}[Cf. {\cite[Theorem 2.8]{baldasso2017exclusion}}]\label{thm:dynamic}
For any $t \geq 0$ and  for any $G \in C[0,1]$, 
\[\lim_{N \rightarrow \infty} \frac{1}{N} \sum_{x=1}^{N-1}  \eta_{tN^2} (x) G \big(\tfrac{x}{N}\big) = \int_0^1 \rho (t,u) G(u)\,du \]
in probability, where

\noindent $(i)$ if $0 \leq \theta < 1$, then $\rho (t,u)$ is the unique weak solution to the heat equation with Dirichlet boundary
\begin{equation}\label{dirichlet}
	\left\{\begin{array}{ll}\partial_{t} \rho(t,u)=\Delta \rho (t,u), & u \in(0,1), \quad t \geq 0 \\ \rho(t,0)=\alpha, \rho(t,1)=\beta, & t \geq 0, \\ \rho (0,u)=\rho_0 (u), & u \in[0,1]\end{array}\right.
\end{equation}

\noindent $(ii)$ if $ \theta = 1$, then $\rho (t,u)$ is the unique weak solution to the heat equation with Robin boundary
\begin{equation}\label{robin}
	\left\{\begin{array}{ll}\partial  \rho(t,u)=\Delta \rho(t,u), & u \in(0,1), \quad t \geq 0 \\ \partial_{u} \rho(t,0)=c\left(\rho(t,0)-\alpha\right), & t \geq 0, \\ \partial_{u} \rho(t,1)=c\left(\beta-\rho(t,1)\right), & t \geq 0 \\ \rho(0,u)=\rho_0 (u), & u \in[0,1]\end{array}\right.
\end{equation}

\noindent $(iii)$ if $ \theta > 1$, then $\rho (t,u)$ is the unique weak solution to the heat equation with Neumann boundary
\begin{equation}\label{neumann}
	\left\{\begin{array}{ll}\partial_{t} \rho(t,u)=\Delta \rho(t,u), & u \in(0,1), \quad t \geq 0 \\ \partial_{u} \rho(t,0)=0, \partial_{u} \rho(t,1)=0, & t \geq 0 \\ \rho(0,u)=\rho_0(u), & u \in[0,1]\end{array}\right.
\end{equation}
 \end{theorem}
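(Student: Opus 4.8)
The plan is to follow the standard programme for hydrodynamic limits: prove tightness of the empirical measures, show that every limit point is concentrated on weak solutions of the relevant equation, and conclude by uniqueness of such solutions. Introduce the empirical measure
\[
\pi^N_t \;:=\; \frac{1}{N}\sum_{x=1}^{N-1}\eta_{tN^2}(x)\,\delta_{x/N},
\]
a random element of the Skorokhod space $D([0,T],\mc{M})$, where $\mc{M}$ is the set of positive measures on $[0,1]$ of total mass at most one, and write $\langle\pi^N_t,G\rangle=\frac1N\sum_x\eta_{tN^2}(x)G(x/N)$. Since $|\langle\pi^N_t,G\rangle-\langle\pi^N_t,\tilde G\rangle|\le\|G-\tilde G\|_\infty$, it suffices to prove the stated convergence for smooth $G$ and then pass to general $G\in C[0,1]$ by density; the smooth test functions will moreover be chosen to respect the boundary conditions appropriate to each regime.

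The engine of the proof is the Dynkin martingale
\[
M^N_t(G)\;=\;\langle\pi^N_t,G\rangle-\langle\pi^N_0,G\rangle-\int_0^t N^2L_N\langle\pi^N_s,G\rangle\,ds .
\]
A direct computation gives $N^2L_{N,0}\langle\pi^N,G\rangle=\frac1N\sum_x\eta(x)\,N^2\Delta_N G(x/N)$ up to two edge currents of order one, namely $\eta(1)\,\partial_uG(0)$ and $-\eta(N-1)\,\partial_uG(1)$, where $\Delta_N$ is the discrete Laplacian and $N^2\Delta_N G\to\Delta G$, so the bulk drift converges to $\int_0^1\rho\,\Delta G$. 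On the other hand $L_{N,b}^\alpha\eta(1)=cN^{-\theta}(\alpha-\eta(1))$, so the reservoir adds the drift $cN^{1-\theta}(\alpha-\eta(1))G(1/N)$ at the left edge, of order $N^{1-\theta}$, and symmetrically at the right. The factor $N^{1-\theta}$ drives the announced trichotomy: for $\theta<1$ this reservoir drift diverges and can remain finite only if the boundary occupation equilibrates to the reservoir density, forcing the Dirichlet traces $\rho(t,0)=\alpha$, $\rho(t,1)=\beta$ in \eqref{dirichlet}; for $\theta=1$ it is of the same order as the edge current and the two combine into the Robin flux of \eqref{robin}; for $\theta>1$ it is negligible and only the no-flux edge current survives, giving the Neumann condition of \eqref{neumann}.

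Tightness of $(\pi^N)_N$ follows from Aldous' criterion, using the uniform bounds on the drift and the quadratic variation supplied by the computation above. Choosing, for each regime, smooth test functions adapted to the boundary conditions (vanishing at the endpoints for $\theta<1$, unconstrained for $\theta=1$, with vanishing normal derivative for $\theta>1$), the quadratic variation of $M^N_t(G)$ is negligible: each jump moves $\langle\pi^N,G\rangle$ by $O(N^{-1})$ and, after the $N^2$ time change, $\E[\langle M^N(G)\rangle_t]\lesssim t/N$, so the martingale vanishes in $L^2$. The genuinely delicate point, and the main obstacle, is to pass to the limit in the boundary drifts, which involve the occupation $\eta(1)$ at the single site $1$ rather than a macroscopic average. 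This is exactly what the replacement lemmas of Section \ref{sec:preliminary} provide: via the Dirichlet-form bounds inherited from \cite{baldasso2017exclusion}, they allow $\eta(1)$ to be replaced, in the time-integrated sense, by the reservoir value $\alpha$ when $\theta<1$ and by a mesoscopic block average of the density near the edge when $\theta\ge1$, and symmetrically at site $N-1$.

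It remains to identify the limit. To give rigorous meaning to the boundary traces appearing in the Dirichlet and Robin regimes, one proves an energy estimate showing that $\rho(t,\cdot)$ belongs to a Sobolev space, so that its boundary values are well defined and the block averages furnished by the replacement lemmas indeed converge to them. Inserting these ingredients into the limiting form of the martingale identity yields precisely the weak formulation of \eqref{dirichlet}, \eqref{robin} or \eqref{neumann} for the corresponding class of test functions. Since the weak solution is unique in each regime, every limit point is the deterministic measure $\rho(t,u)\,du$; this identifies the limit and upgrades the convergence in distribution to the convergence in probability asserted in the theorem. Finally, weak convergence of $\pi^N_t$ to $\rho(t,u)\,du$ extends the identity from the adapted test functions to all $G\in C[0,1]$ by density.
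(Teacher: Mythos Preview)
The paper does not prove Theorem~\ref{thm:dynamic}: it is quoted from \cite{baldasso2017exclusion} as background (note the ``Cf.'' attribution in the theorem heading), and no argument for it appears anywhere in the present paper. So there is no in-paper proof to compare your proposal against.

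That said, your sketch follows the standard tightness/identification programme, which is indeed the route taken in \cite{baldasso2017exclusion}, and your heuristic for the trichotomy via the boundary factor $N^{1-\theta}$ is correct. There is, however, a concrete error in your appeal to ``the replacement lemmas of Section~\ref{sec:preliminary}''. Those lemmas (Lemmas~\ref{lem:replace_sub}--\ref{lem:replace_average}) are stated and proved for the process $\eta^N_t$ with generator $N^{2+\gamma}L_N$, i.e.\ under the \emph{longer} time scaling $N^{2+\gamma}$ with $\gamma>0$, not under the diffusive scaling $N^2$ relevant to Theorem~\ref{thm:dynamic}; their proofs use the extra factor $N^\gamma$ in an essential way (witness the choices $A=N^\gamma\log N$ and $A=N^\gamma/\log N$). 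Moreover, Lemma~\ref{lem:replace_sup} replaces the boundary occupation by the \emph{global} average $m^N_s$, not by a mesoscopic block average near the edge as you write, and Section~\ref{sec:preliminary} contains no replacement lemma for the case $\theta=1$ at all. The replacement lemmas actually needed for Theorem~\ref{thm:dynamic} are those of \cite{baldasso2017exclusion} at the diffusive scale, which differ both in statement and in purpose from the ones in this paper.
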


We refer the readers to \cite{baldasso2017exclusion} for rigorous definitions of weak solutions the above PDEs. 

\subsection{Hydrostatic limit.} Since the process $(\eta_t)_{t \geq 0}$  is irreducible, it has a unique invariant measure denoted by $\mu_N^{ss}$.  The following result characterize the macroscopic density profile under the invariant measure $\mu_N^{ss}$.

\begin{theorem}[Cf. {\cite[Theorem 2.2]{baldasso2017exclusion} and \cite{tsunoda2020hydrostatic}}]\label{thm:static}
 For any $G \in C[0,1]$, 
\[\lim_{N \rightarrow \infty} \frac{1}{N} \sum_{x=1}^{N-1}  \eta(x) G \big(\tfrac{x}{N}\big) = \int_0^1 \bar{\rho }(u) G(u)\,du \]
in probability with respect to $\mu_N^{ss}$, where
\begin{equation*}
	\bar{\rho}(u)=\left\{\begin{array}{ll}(\beta-\alpha) u+\alpha, & \text { if } \theta \in[0,1) \\ \frac{c(\beta-\alpha)}{2+c} u+\alpha+\frac{\beta-\alpha}{2+c}, & \text { if } \theta=1 \\ \frac{\beta+\alpha}{2}, & \text { if } \theta \in(1, \infty)\end{array}\right.
\end{equation*}
\end{theorem}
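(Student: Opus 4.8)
The plan is to exploit the fact that, because the SSEP is a system of the simplest exchange type, the one- and two-point correlation functions under the invariant measure $\mu_N^{ss}$ satisfy \emph{closed} discrete equations, so that both the mean and the variance of the empirical average can be computed directly. Write $\rho_N(x):=\E_{\mu_N^{ss}}[\eta(x)]$. Since $\mu_N^{ss}$ is stationary, $\E_{\mu_N^{ss}}[L_N\eta(x)]=0$ for every $x$. A direct computation of $L_N\eta(x)$ gives the discrete Laplacian $\eta(x+1)+\eta(x-1)-2\eta(x)$ in the bulk $2\le x\le N-2$, together with the boundary expressions $L_N\eta(1)=\eta(2)-\eta(1)+cN^{-\theta}(\alpha-\eta(1))$ and $L_N\eta(N-1)=\eta(N-2)-\eta(N-1)+cN^{-\theta}(\beta-\eta(N-1))$. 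Hence $\rho_N$ is discrete harmonic in the bulk, thus affine, $\rho_N(x)=A_N+B_Nx$, subject to Robin-type boundary conditions; solving the resulting $2\times2$ linear system yields $B_N=\frac{cN^{-\theta}(\beta-\alpha)}{2+cN^{-\theta}(N-2)}$ and the corresponding $A_N$.

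First I would extract the scaling limit of $\rho_N$ in each regime. As $N\to\infty$ the quantity $cN^{-\theta}(N-2)$ converges to $+\infty$ when $\theta<1$, to $c$ when $\theta=1$, and to $0$ when $\theta>1$; feeding this into the explicit formulas for $A_N,B_N$ shows that $\rho_N(x)\to\bar\rho(x/N)$ uniformly, with $\bar\rho$ the affine (resp. constant) profile of the statement in the three cases. This already gives the convergence of the mean, $\E_{\mu_N^{ss}}[\frac1N\sum_x\eta(x)G(\tfrac xN)]=\frac1N\sum_x\rho_N(x)G(\tfrac xN)\to\int_0^1\bar\rho(u)G(u)\,du$, by Riemann-sum convergence for $G\in C[0,1]$. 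I would stress that for $\theta>1$ this computation is genuinely needed: the Neumann problem has a whole family of constant stationary solutions, and it is the slow boundary that selects the average $(\alpha+\beta)/2$.

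Next I would control the fluctuations through the truncated two-point function $\varphi_N(x,y):=\E_{\mu_N^{ss}}[\eta(x)\eta(y)]-\rho_N(x)\rho_N(y)$ for $x\neq y$. Applying stationarity to the observables $\eta(x)\eta(y)$ gives, again by a direct computation, a closed discrete equation: $\varphi_N$ is discrete harmonic for the two-dimensional lattice Laplacian on $\{1\le x<y\le N-1\}$ away from the diagonal, carries a source along the neighbouring diagonal $y=x+1$ produced by the exclusion rule, and inherits Robin-type conditions on the spatial boundary from the reservoir terms. A maximum-principle argument for this discrete problem gives the uniform bound $\sup_{x\neq y}|\varphi_N(x,y)|=O(N^{-1})$. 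Combining the diagonal contribution $\rho_N(x)(1-\rho_N(x))\le\frac14$ with this off-diagonal bound yields $\vari_{\mu_N^{ss}}\big(\frac1N\sum_x\eta(x)G(\tfrac xN)\big)=O(N^{-1})$, and Chebyshev's inequality upgrades the convergence of the mean to convergence in probability.

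I expect the two-point estimate to be the main obstacle. Setting up the discrete equation for $\varphi_N$ correctly, in particular the diagonal source term and the boundary conditions, which depend on $\theta$ through the slow rate $cN^{-\theta}$ and differ qualitatively across the three regimes, and then running the maximum principle uniformly in $N$, is the delicate part; the one-point computation and the Chebyshev step are routine by comparison. These estimates are in essence those established in \cite{baldasso2017exclusion,tsunoda2020hydrostatic}, from which the statement is quoted.
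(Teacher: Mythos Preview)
The paper does not prove this theorem at all: it is quoted verbatim from \cite{baldasso2017exclusion} and \cite{tsunoda2020hydrostatic} (the label ``Cf.'' in the header signals exactly this), so there is no in-paper proof to compare against. Your sketch is precisely the strategy used in those references---closed discrete equations for the one- and two-point functions under $\mu_N^{ss}$, explicit solution of the affine mean profile with Robin data of strength $cN^{-\theta}$, and a maximum-principle bound on the truncated correlations followed by Chebyshev---so your proposal is correct and in line with the cited proofs.

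One remark on the point you flag as the main obstacle. For $\theta\le 1$ the off-diagonal bound $\sup_{x\neq y}|\varphi_N(x,y)|=O(N^{-1})$ is exactly what \cite{baldasso2017exclusion} obtains, and your description is accurate. For $\theta>1$ the Robin data degenerate to Neumann and the naive maximum-principle argument loses its grip; however the diagonal source $(\rho_N(x{+}1)-\rho_N(x))^2=B_N^2$ is then of order $N^{-2\theta}$, which compensates, and this is where \cite{tsunoda2020hydrostatic} supplies the missing estimate. You already point to this reference, so the sketch is complete as a plan, but be aware that the $\theta>1$ case is not a carbon copy of the other two.
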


We say $\rho:[0,1] \rightarrow [0,1]$ is a stationary solution to \eqref{dirichlet} if
\[\Delta \rho (u) = 0, \;u \in (0,1),\quad\rho (0) = \alpha, \quad\rho(1) = \beta.\]
Stationary solutions to \eqref{robin} and \eqref{neumann} could be defined in the same way. Note that the hydrostatic limits $\bar{\rho}$ are stationary solutions to the corresponding hydrodynamic equations as stated in Theorem \ref{thm:dynamic}.  We underline that the stationary solution to \eqref{dirichlet} and \eqref{robin} is unique, while the stationary solution to \eqref{neumann} is not unique. Indeed, any constant function is a stationary solution to \eqref{neumann}. The above theorem tells us that the correct choice is $(\alpha + \beta)/2$.

\subsection{Long-time limit.} In this subsection, we state the main result of the article. Fix $\gamma > 0$.  We shall consider the process speeded up by $N^{2+\gamma}$. Denote by $(\eta^N_t)_{t \geq 0}$ the process with generator $N^{2+\gamma} L_N$. Then $(\eta^N_t)_{t \geq 0}$ and $(\eta_{tN^{2+\gamma}})_{t \geq 0}$ have the same distribution. We are interested in the long time behavior of the empirical measure $\pi^N_t$ of the process defined as
\[\pi^N_t (du) = \frac{1}{N} \sum_{x=1}^{N-1} \eta^N_t (x) \delta_{x/N} (du),\]
where $\delta_{x/N} (du)$ is the Dirac measure on the point $x/N$. Whence, $\pi^N_t$ is a random measure on $[0,1]$ with total mass bounded by one. With this notation, for any $G \in C[0,1]$,
\[\<\pi^N_t,G\> =  \frac{1}{N} \sum_{x=1}^{N-1} \eta^N_t (x)  G \big(\tfrac{x}{N}\big).\]

 For $\theta > 1$ and $0 < \gamma \leq \theta - 1$, we assume  that the average number of particles converges in the following sense: there exists $m_0 \in [0,1]$ such that
 \begin{equation}\label{initial}
\lim_{N \rightarrow \infty} E_{\mu_N} \Big[\Big|\frac{1}{N-1} \sum_{x=1}^{N-1} \eta(x) - m_0\Big|\Big] = 0.
 \end{equation}
We underline that we impose no restrictions on the initial measure $\mu_N$ in the rest of the cases. 

 Denote by $\P^N_{\mu_N}$ the probability measure on $D([0,\infty),\Omega_N)$ associated to the process $(\eta^N_t)_{t \geq 0}$ and the initial measure $\mu_N$, and by $\E^N_{\mu_N}$ the corresponding expectation. 

We now state  the law of large numbers for the empirical measure $\pi^N_t$. 

\begin{theorem}\label{thm:longtime}
For any $t > 0$ and for any $G \in C[0,1]$,
\[\En{\Big\{\<\pi^N_s,G\> - \int_0^1 \rho_{\theta,\gamma} (s,u) G(u) du \Big\}} = 0,\]
where 
\begin{equation}\label{stationrysol}
	\rho_{\theta,\gamma}(t,u)=\left\{\begin{array}{ll}(\beta-\alpha) u+\alpha, & \text { if } 0 \leq \theta<1 \\ 
		\frac{c(\beta-\alpha)}{2+c} u+\alpha+\frac{\beta-\alpha}{2+c}, & \text { if } \theta=1,\\ 
	  m_0, & \text { if } \theta>1,\,0 < \gamma < \theta - 1,\\ 
		\frac{\beta+\alpha}{2}, & \text { if } \theta>1,\,\gamma > \theta - 1,\\
		\frac{\beta+\alpha}{2} + \Big(m_0 - \frac{\alpha+\beta}{2}\Big)e^{-2ct} , & \text { if } \theta>1,\,\gamma = \theta - 1.
	\end{array}\right.
\end{equation} 
\end{theorem}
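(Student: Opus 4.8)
The plan is to exploit a feature special to the symmetric simple exclusion process: the one- and two-point correlation functions of $(\eta^N_s)$ satisfy \emph{closed} linear evolution equations, so the empirical observable can be split into a deterministic mean profile and a fluctuation, which are estimated separately. Write $\rho^N(s,x)=\E^N_{\mu_N}[\eta^N_s(x)]$, let $\bar\rho^N_s=\frac{1}{N-1}\sum_{x=1}^{N-1}\rho^N(s,x)$ be its spatial mean, and set $a_s=\int_0^1\rho_{\theta,\gamma}(s,u)G(u)\,du$. Since $\int_0^t(\langle\pi^N_s,G\rangle-a_s)\,ds=\int_0^t(\langle\pi^N_s,G\rangle-\E^N_{\mu_N}\langle\pi^N_s,G\rangle)\,ds+\int_0^t(\E^N_{\mu_N}\langle\pi^N_s,G\rangle-a_s)\,ds$, by Minkowski's integral inequality the quantity to be bounded is at most $\int_0^t\sqrt{\mathrm{Var}^N_{\mu_N}(\langle\pi^N_s,G\rangle)}\,ds+\big|\int_0^t(\E^N_{\mu_N}\langle\pi^N_s,G\rangle-a_s)\,ds\big|$.

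First I would dispose of the fluctuation term. The truncated two-point function $\varphi^N_s(x,y)=\E^N_{\mu_N}[\eta^N_s(x)\eta^N_s(y)]-\rho^N(s,x)\rho^N(s,y)$ obeys a closed discrete heat equation on $\{1\le x<y\le N-1\}$ whose source is supported near the diagonal; the classical analysis for boundary-driven SSEP, transported to the present time scale, gives $\sup_{x\neq y}|\varphi^N_s(x,y)|\le C/N+Ce^{-cN^{\gamma}s}$, the exponential accounting for the (possibly $O(1)$) initial correlations, which are erased at the spectral-gap rate $N^{\gamma}$ of the mean-zero sector of the speeded-up dynamics. This yields $\mathrm{Var}^N_{\mu_N}(\langle\pi^N_s,G\rangle)\le C/N+Ce^{-cN^{\gamma}s}$, whence $\int_0^t\sqrt{\mathrm{Var}^N_{\mu_N}(\langle\pi^N_s,G\rangle)}\,ds\to0$. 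In the regimes $\theta>1$, $\gamma\le\theta-1$ the slowest mode is instead the mass mode, which relaxes only at the rate $N^{1+\gamma-\theta}$; there the required smallness of the mass fluctuation is supplied by assumption \eqref{initial}, which forces $\mathrm{Var}^N_{\mu_N}(\langle\pi^N_0,G\rangle)\to0$ and is propagated in time by the boundedness of the associated martingale.

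It remains to analyse the deterministic profile, which solves the closed linear system $\partial_s\rho^N=N^{2+\gamma}\mathcal L^{\ast}_N\rho^N$, the bulk part being the discrete Laplacian and the two boundary sites carrying the reaction $cN^{-\theta}(\alpha-\rho^N(s,1))$, resp. $cN^{-\theta}(\beta-\rho^N(s,N-1))$. Setting $w(s,x)=\rho^N(s,x)-\bar\rho^N_s$, an energy estimate shows that the mean-zero modes decay at rate $\gtrsim N^{\gamma}\to\infty$, so $\int_0^t\|w(s,\cdot)\|_{\ell^2}\,ds\to0$ and the profile flattens; for $\theta\le1$ the stronger boundary reaction instead drives $\rho^N(s,\cdot)$ to the unique stationary Dirichlet, resp. Robin, solution of Theorem~\ref{thm:static} at the same rate, producing the first two lines of \eqref{stationrysol}. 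For $\theta>1$ the spatial mean obeys the exact mass balance
\[\tfrac{d}{ds}\bar\rho^N_s=\tfrac{cN^{2+\gamma-\theta}}{N-1}\big[(\alpha-\rho^N(s,1))+(\beta-\rho^N(s,N-1))\big],\]
and replacing the two boundary occupations by $\bar\rho^N_s$ gives, to leading order, $\tfrac{d}{ds}\bar\rho^N_s= cN^{1+\gamma-\theta}(\alpha+\beta-2\bar\rho^N_s)+o(1)$. The three cases then follow according to whether $N^{1+\gamma-\theta}$ tends to $0$ (mass frozen at $m_0$ by \eqref{initial}, if $\gamma<\theta-1$), to $\infty$ (instant relaxation to $(\alpha+\beta)/2$, if $\gamma>\theta-1$), or to the constant $c$ (the limiting ODE $\dot m=c(\alpha+\beta-2m)$ with datum $m_0$, whose solution is the exponential profile in \eqref{stationrysol}, if $\gamma=\theta-1$); in every case $\bar\rho^N_s\to\rho_{\theta,\gamma}(s,\cdot)$ and the flattening upgrades this to $\E^N_{\mu_N}\langle\pi^N_s,G\rangle\to a_s$.

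The main obstacle is the boundary-to-bulk replacement that closes the mass balance for $\theta>1$: one must justify replacing $\rho^N(s,1)$ and $\rho^N(s,N-1)$ by the global mean $\bar\rho^N_s$, i.e. control $w$ \emph{up to the boundary}, where the prefactor $N^{2+\gamma-\theta}$ amplifies any error. The energy estimate only yields $\ell^2$-smallness of $w$, and near the slow boundary the profile develops a boundary layer that must be ruled out by a separate discrete maximum-principle or boundary-regularity argument; equivalently, on the stochastic side this is precisely the replacement lemma that must be re-derived under the $N^{2+\gamma}$ scaling, where the entropy method furnishes only the degraded bound $\int_0^tN^{2+\gamma}D_N(\sqrt{f^N_s})\,ds=O(N)$, i.e. $\int_0^tD_N(\sqrt{f^N_s})\,ds=O(N^{-1-\gamma})$, which is least effective when $\gamma$ is small. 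Handling this uniformly across the three regimes, and in particular keeping exact track of the marginal mass mode in the critical case $\gamma=\theta-1$, is the delicate point; the necessary estimates are exactly the Dirichlet-form replacement lemmas borrowed from \cite{baldasso2017exclusion}.
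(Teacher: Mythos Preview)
Your route is genuinely different from the paper's. You exploit the closed correlation hierarchy specific to SSEP: split $\langle\pi^N_s,G\rangle-a_s$ into a fluctuation part (controlled by two-point functions) and a deterministic part (the one-point function $\rho^N(s,x)$, analysed as a discrete PDE). The paper instead never looks at correlation functions. It writes the Dynkin martingale $M^N_t(H)$ for a test function $H$, divides by $N^{\gamma}$ so that both the martingale and $\langle\pi^N_t,H\rangle$ drop out, and is left with the identity $\E^N_{\mu_N}\big|\int_0^t N^{2}L_N\langle\pi^N_s,H\rangle\,ds\big|\to0$. The trick is then to \emph{choose} $H\in C^2[0,1]$ with $H''=G$ on $(0,1)$ and with boundary values tailored to each regime: $H(0)=H(1)=0$ when $\theta<1$, Robin-compatible values $H'(0)=cH(0)$, $H'(1)=-cH(1)$ when $\theta=1$, and no constraint when $\theta>1$. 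With that choice the integrand becomes $\langle\pi^N_s,G\rangle$ minus boundary terms that are handled directly by the replacement Lemmas~\ref{lem:replace_sub}--\ref{lem:replace_average}. The critical case $\theta=1$ needs no replacement lemma at all; the whole case fits in ten lines.

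What each approach buys: your duality argument, if carried through, would yield stronger information (variance control, hence convergence in probability for each fixed $s>0$, not merely of the time integral), but it is SSEP-specific and requires a nontrivial spectral analysis of the two-particle generator to justify the decay $\sup_{x\neq y}|\varphi^N_s(x,y)|\le C/N+Ce^{-cN^{\gamma}s}$, which you have only asserted. The paper's entropy/Feynman--Kac approach is model-robust and sidesteps the two-point analysis entirely, at the price of only giving the time-averaged statement.

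One point to tighten: for $\theta>1$ your closure of the mass balance needs $\rho^N(s,1)-\bar\rho^N_s\to0$ at the \emph{deterministic} level, and you correctly flag that the energy estimate gives only $\ell^2$-smallness of $w=\rho^N-\bar\rho^N$, not boundary control. Observe that the paper's Lemma~\ref{lem:replace_sup}, after taking expectations, already yields $\big|\int_0^t(\rho^N(s,1)-\bar\rho^N_s)\,ds\big|\to0$, which is exactly what a Gr\"onwall argument on the mass ODE requires; so you do not need a separate discrete maximum-principle. Your sketch is therefore completable, but the paper's test-function device is both shorter and avoids the variance estimate altogether.
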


\begin{remark}
	Note that $\rho_{\theta,\infty} = \bar{\rho}$. This is not surprising since the hydrostatic limit stated in Theorem \ref{thm:static} could be formally interpreted as taking $\gamma = \infty$.  Compared with Theorem \ref{thm:static}, the above theorem states that phase transition occurs even in the supercritical case  $\theta > 1$.
\end{remark}

\begin{remark}
The result should also hold if the density reservoirs vary slowly with time, i.e., if replacing $\alpha$ (resp. $\beta$) with some smooth function $\rho_- (t): \R_+ \rightarrow (0,1)$ (resp. $\rho_+ (t): \R_+ \rightarrow (0,1)$). This is called  quasi-static hydrodynamic limit\;\cite{de2021quasi,de2015quasi}. 
\end{remark} 

\section{Preliminary results}\label{sec:preliminary}

In this section, we introduce the notion of Dirichlet forms and prove several replacement lemmas in different regimes, which are crucial in  the proof of Theorem \ref{thm:longtime}.

\subsection{Dirichlet form.} For a probability measure $\mu$ on $\Omega_N$ and a function $g: \Omega_N \rightarrow \R$, the Dirichlet forms corresponding to the bulk/boundary dynamics are defined as
\begin{align*}
D_{N, 0}(g, \mu)&:=\frac{1}{2} \sum_{x=1}^{N-2} \sum_{\eta \in \Omega_{N}}\left(g\left(\eta^{x, x+1}\right)-g(\eta)\right)^{2} \mu(\eta),\\ 
D_{N, b}^{\alpha}(g, \mu)&:=\frac{1}{2} \sum_{\eta \in \Omega_{N}} c N^{-\theta} r_{\alpha}(\eta)\left(g\left(\eta^{1}\right)-g(\eta)\right)^{2} \mu(\eta),\\
D_{N, b}^{\beta}(g, \mu)&:=\frac{1}{2} \sum_{\eta \in \Omega_{N}} c N^{-\theta} r_{\beta}(\eta)\left(g\left(\eta^{N-1}\right)-g(\eta)\right)^{2} \mu(\eta).
\end{align*}
For any two functions $f,g: \Omega_N \rightarrow \R$, denote
\[\<f,g\>_\mu = \sum_{\eta \in \Omega_{N}} f(\eta) g (\eta) \mu(\eta).\]
For any density profile $\lambda: [0,1] \rightarrow [0,1]$, let $\nu^N_{\lambda(\cdot)}$ be the product measure on $\Omega_N$ with marginals given by 
\[\nu_{\lambda(\cdot)}^{N} \{\eta:\eta(x)=1\}=\lambda \left(\tfrac{x}{N}\right), \quad x \in I_N.\]
In particular, if $\lambda (\cdot) \equiv \rho$ for some $\rho \in [0,1]$, we simply write $\nu^N_\rho$. 

The following lemma compares $\<L_N g,g\>_\mu$ with $D_N (g,\mu)$.

\begin{lemma}\label{lem:dirichlet} $(i)$ Let $\lambda: [0,1] \rightarrow (0,1)$ be a smooth density profile such that there exists a neighborhood of $0$ where $\gamma (\cdot)= \alpha$, and a neighborhood of $1$ where $\gamma (\cdot)= \beta$. Let $f$ be a $\nu^N_{\lambda(\cdot)}$-density, 
\[f \geq 0, \quad \sum_{\eta \in \Omega_{N}} f(\eta) \nu^N_{\lambda(\cdot)} (\eta) = 1.\] 
Then
	\[\<L_N \sqrt{f},\sqrt{f}\>_{\nu^N_{\lambda(\cdot)}} = -(1/2) D_{N,0} (\sqrt{f}, \nu^N_{\lambda(\cdot)}) - D_{N,b}^\alpha (\sqrt{f}, \nu^N_{\lambda(\cdot)}) - D_{N,b}^\beta (\sqrt{f}, \nu^N_{\lambda(\cdot)}) + \mathcal{O} (N^{-1}),\]
where $|\mathcal{O} (N^{-1})| \leq C N^{-1}$ for some finite constant $C$.

\noindent $(ii)$ Let $\rho \in (0,1)$ be a constant. Let $f$ be a $\nu^N_{\rho}$-density. Then
\[\<L_N \sqrt{f},\sqrt{f}\>_{\nu^N_{\rho}} = - D_{N,0} (\sqrt{f}, \nu^N_{\rho}) - D_{N,b}^\alpha (\sqrt{f}, \nu^N_{\rho}) - D_{N,b}^\beta (\sqrt{f}, \nu^N_{\rho}) + \mathcal{O} (N^{-\theta}).\]
\end{lemma}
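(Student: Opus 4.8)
The plan is to compute $\langle L_N \sqrt{f}, \sqrt{f}\rangle_\mu$ directly by splitting the generator into its three pieces $L_{N,0}$, $L_{N,b}^\alpha$, $L_{N,b}^\beta$ and handling each against the reference measure $\mu$ (either $\nu^N_{\lambda(\cdot)}$ or $\nu^N_\rho$). The fundamental identity I would exploit is the elementary algebraic fact that for any symmetric exchange/flip operation with rate $c(\eta)$ and a measure $\mu$, one has $\sum_\eta c(\eta)(\sqrt{f}(\eta^{\sigma}) - \sqrt{f}(\eta))\sqrt{f}(\eta)\,\mu(\eta) = -\tfrac{1}{2}\sum_\eta c(\eta)(\sqrt{f}(\eta^\sigma)-\sqrt{f}(\eta))^2\mu(\eta)$ whenever $\mu$ is \emph{reversible} for that move, i.e.\ whenever $c(\eta)\mu(\eta) = c(\eta^\sigma)\mu(\eta^\sigma)$. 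Thus the strategy is: for each elementary move, measure precisely the failure of reversibility of the product measure, show it produces a Dirichlet form with the correct sign plus a controllable error, and sum the errors.

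\smallskip
\noindent\emph{Step 1 (bulk term).} For the exchange move $\eta \mapsto \eta^{x,x+1}$ under $\nu^N_{\lambda(\cdot)}$, the product measure is \emph{not} exactly reversible because the marginals at $x$ and $x+1$ differ: $\lambda(x/N) \neq \lambda((x+1)/N)$. I would write $\nu^N_{\lambda(\cdot)}(\eta^{x,x+1})/\nu^N_{\lambda(\cdot)}(\eta) = 1 + O(1/N)$ using the smoothness of $\lambda$ (a single Taylor step, so the discrepancy per bond is $O(N^{-1})$ with a constant controlled by $\|\lambda'\|_\infty$ and by the lower bound on $\lambda$, which is bounded away from $0$ and $1$). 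Summing over the $N-2$ bonds with a factor $N^{-1}$ each would naively give $O(1)$, so the point is that after symmetrizing one obtains $-\tfrac12 D_{N,0}(\sqrt f,\nu^N_{\lambda(\cdot)})$ as the leading term and the reversibility defect enters as a cross term that, after a Cauchy--Schwarz or Young inequality against the Dirichlet form itself, is absorbed up to $O(N^{-1})$. The factor $1/2$ in front of $D_{N,0}$ in part $(i)$, as opposed to its absence in part $(ii)$, is precisely the signature of this non-reversibility: for the constant profile $\nu^N_\rho$ the bulk measure \emph{is} reversible, the defect vanishes, and the clean identity above gives the full $-D_{N,0}$.

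\smallskip
\noindent\emph{Step 2 (boundary terms).} For the flip at site $1$ with rate $cN^{-\theta}r_\alpha(\eta)$ I would check reversibility against $\nu^N_\rho$ at the boundary value. In part $(i)$ the profile is designed so that $\lambda \equiv \alpha$ near $0$ and $\lambda \equiv \beta$ near $1$; the product measure therefore puts marginal exactly $\alpha$ at site $1$, which makes $\nu^N_{\lambda(\cdot)}$ reversible for the $L_{N,b}^\alpha$ flip, yielding the clean $-D_{N,b}^\alpha$ with no defect (and symmetrically for $\beta$); since these boundary rates already carry $N^{-\theta}$, any residual error is harmless and folded into the $O(N^{-1})$ bulk error. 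In part $(ii)$ the reference marginal at site $1$ is $\rho$ rather than $\alpha$, so reversibility fails: the defect is $O(1)$ relative to the boundary Dirichlet form but multiplied by the rate prefactor $cN^{-\theta}$, which is exactly why the total error in $(ii)$ degrades to $O(N^{-\theta})$ rather than $O(N^{-1})$.

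\smallskip
\noindent\emph{Main obstacle.} The delicate point is the bulk cross term in part $(i)$: showing that the non-reversibility defect, summed over all $N-2$ bonds, is genuinely $O(N^{-1})$ and not merely $O(1)$. The naive bound loses a factor of $N$ because each bond contributes $O(N^{-1})$ but there are $O(N)$ of them. The resolution is that the defect appears \emph{linearly} as a product $(\sqrt f(\eta^{x,x+1})-\sqrt f(\eta))\cdot\sqrt f(\eta)\cdot O(N^{-1})$, and one controls it by Young's inequality as $\tfrac{\epsilon}{2}(\sqrt f(\eta^{x,x+1})-\sqrt f(\eta))^2 + \tfrac{1}{2\epsilon}f(\eta)\,O(N^{-2})$; summing the second piece over bonds and over $\eta$ (using that $f$ is a density, so $\sum_\eta f(\eta)\mu(\eta)=1$) gives $O(N)\cdot O(N^{-2}) = O(N^{-1})$, while the first piece is reabsorbed into the Dirichlet form with a small constant. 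Getting the bookkeeping of this absorption clean — so that the surviving coefficient in front of $D_{N,0}$ is exactly $1/2$ and the remainder is uniformly $O(N^{-1})$ independent of $f$ — is where the real work lies.
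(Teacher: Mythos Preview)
Your proposal is correct and captures the essential mechanism. Note, however, that the paper itself does not prove this lemma: it simply invokes \cite[Lemma~5.1, Lemma~5.2 and Corollary~5.3]{baldasso2017exclusion} and omits the argument entirely. What you have written is effectively a reconstruction of the standard proof found there --- the decomposition of $\langle L_N\sqrt f,\sqrt f\rangle_\mu$ into a symmetric (reversible) part plus a reversibility defect, with the bulk defect controlled via Young's inequality using the $O(N^{-1})$ smoothness bound on $\nu^N_{\lambda(\cdot)}(\eta^{x,x+1})/\nu^N_{\lambda(\cdot)}(\eta)$, and the boundary defect in part $(ii)$ controlled by the rate prefactor $cN^{-\theta}$. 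Your diagnosis of why part~$(i)$ carries only $-\tfrac12\,D_{N,0}$ while part~$(ii)$ carries the full $-D_{N,0}$ is exactly right: the half is the price of the Young absorption step, which is unnecessary when the reference measure is the exchange-invariant $\nu^N_\rho$.

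One small remark: the Young-inequality route you describe naturally produces the \emph{inequality} $\leq$ in part~$(i)$ rather than the equality written in the statement (the absorbed half of $D_{N,0}$ is a genuine loss, not an identity). This is not a defect of your argument --- the cited lemmas in \cite{baldasso2017exclusion} are also stated as inequalities, and only the upper bound is ever used downstream in Lemmas~\ref{lem:replace_sub} and~\ref{lem:replace_sup}.
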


The first statement $(i)$ is a direct consequence of \cite[Lemma 5.1 $(ii)$ and Lemma 5.2]{baldasso2017exclusion}.  The second statement $(ii)$ follows directly from \cite[Lemma 5.1 $(i)$ and Corrollary 5.3]{baldasso2017exclusion}. For this reason, we omit the proof here.

The following lemma bound the occupation variables at the boundary sites by the corresponding Dirichlet forms.

\begin{lemma}\label{lem:boundary}
Let $\lambda: [0,1] \rightarrow (0,1)$ be a smooth density profile such that there exists a neighborhood of $0$ where $\gamma (\cdot)= \alpha$, and a neighborhood of $1$ where $\gamma (\cdot)= \beta$. Let $f$ be a $\nu^N_{\lambda(\cdot)}$-density. Then there exists a finite constant $C_\alpha$ such that for any $B > 0$,
\[\big| \sum_{\eta \in \Omega_{N}} (\eta (1) - \alpha) f (\eta) \nu^N_{\lambda(\cdot)}(\eta) \big| \leq C_\alpha  B + C_\alpha  N^{\theta} B^{-1} D^\alpha_{N,b} (\sqrt{f}, \nu^N_{\lambda(\cdot)}).\]
The same result holds at the right boundary: there exists a finite constant $C_\beta$ such that for any $B > 0$,
\[\big| \sum_{\eta \in \Omega_{N}} (\eta (N-1) - \beta) f (\eta) \nu^N_{\lambda(\cdot)}(\eta) \big| \leq C_\beta  B + C_\beta N^{\theta} B^{-1} D^\beta_{N,b} (\sqrt{f}, \nu^N_{\lambda(\cdot)}).\]
\end{lemma}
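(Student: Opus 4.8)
The plan is to isolate the boundary site and exploit the product structure of $\nu^N_{\lambda(\cdot)}$. Since $\lambda(\cdot) = \alpha$ in a neighborhood of $0$, for $N$ large the marginal of $\nu^N_{\lambda(\cdot)}$ at site $1$ is exactly $\alpha$, so I may factor $\nu^N_{\lambda(\cdot)} = \mathrm{Ber}(\alpha)_1 \otimes \nu'$, where $\nu'$ is the product measure on sites $\{2,\ldots,N-1\}$. Writing $g = \sqrt{f}$ and denoting by $g_1$, $g_0$ the values of $g$ on the configuration with $\eta(1) = 1$, respectively $\eta(1) = 0$ (the remaining coordinates being held fixed), I would first carry out the sum over $\eta(1) \in \{0,1\}$. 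A direct computation gives
\[\sum_{\eta \in \Omega_N} (\eta(1) - \alpha) f(\eta) \nu^N_{\lambda(\cdot)}(\eta) = \alpha(1-\alpha)\sum_{\eta'} \nu'(\eta')\big(g_1^2 - g_0^2\big).\]

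Next I would factor $g_1^2 - g_0^2 = (g_1 - g_0)(g_1 + g_0)$ and apply Young's inequality $|ab| \le \tfrac{1}{2}(B a^2 + B^{-1} b^2)$ with $a = g_1 + g_0$ and $b = g_1 - g_0$, the parameter $B > 0$ being the one in the statement. Using $(g_1 + g_0)^2 \le 2(g_1^2 + g_0^2)$ together with $\int f \, d\nu^N_{\lambda(\cdot)} = \sum_{\eta'} \nu'(\eta')\big[\alpha g_1^2 + (1-\alpha) g_0^2\big] = 1$, the term carrying $(g_1 + g_0)^2$ is bounded by a constant multiple of $1$, which yields the first contribution $C_\alpha B$. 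The remaining term carries the quantity $\sum_{\eta'} \nu'(\eta')(g_1 - g_0)^2$.

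The crucial step is to recognize this last sum as the boundary Dirichlet form. Computing $D^\alpha_{N,b}(\sqrt{f}, \nu^N_{\lambda(\cdot)})$ with the same factorization — using that $(g(\eta^1) - g(\eta))^2 = (g_1 - g_0)^2$ in both cases $\eta(1) \in \{0,1\}$, and that $r_\alpha(\eta)$ averages to $2\alpha(1-\alpha)$ against the $\mathrm{Ber}(\alpha)$ weight at site $1$ — gives
\[D^\alpha_{N,b}(\sqrt{f}, \nu^N_{\lambda(\cdot)}) = c N^{-\theta}\, \alpha(1-\alpha) \sum_{\eta'} \nu'(\eta')(g_1 - g_0)^2.\]
Inverting this identity replaces $\sum_{\eta'} \nu'(\eta')(g_1 - g_0)^2$ by $c^{-1} N^{\theta}[\alpha(1-\alpha)]^{-1} D^\alpha_{N,b}$, and after multiplying by the prefactor $\alpha(1-\alpha)$ and by $\tfrac12 B^{-1}$ from Young's inequality, this produces the second contribution $C_\alpha N^\theta B^{-1} D^\alpha_{N,b}(\sqrt{f}, \nu^N_{\lambda(\cdot)})$. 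The right boundary is handled identically, with $\alpha$ and site $1$ replaced by $\beta$ and site $N-1$.

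I expect no genuine obstacle, as this is the standard boundary estimate of the hydrodynamic literature. The only point requiring care is the emergence of the factor $N^\theta$, which is forced by the slow rate $c N^{-\theta}$ carried by the boundary Dirichlet form; the estimate is otherwise uniform in $N$. One must also check at the outset that the marginal at the boundary site is exactly $\alpha$ (resp. $\beta$), which is guaranteed precisely by the hypothesis that $\lambda$ is constant near the endpoints.
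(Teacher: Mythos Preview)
Your argument is correct and complete: the factorization at the boundary site, the identity $\sum_\eta(\eta(1)-\alpha)f(\eta)\nu^N_{\lambda(\cdot)}(\eta)=\alpha(1-\alpha)\sum_{\eta'}\nu'(\eta')(g_1^2-g_0^2)$, the Young inequality split, and the identification of $\sum_{\eta'}\nu'(\eta')(g_1-g_0)^2$ with $c^{-1}N^\theta[\alpha(1-\alpha)]^{-1}D^\alpha_{N,b}$ all check out line by line. The paper does not actually give a proof of this lemma; it simply cites Lemmas~5.6 and~5.7 of Baldasso \emph{et al.}\ and omits the argument, so there is nothing to compare against beyond noting that what you wrote is precisely the standard computation those cited lemmas contain.
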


The above lemma is a direct consequence of \cite[Lemmas 5.6 and 5.7]{baldasso2017exclusion}. For that reason, we omit the proof here.

\subsection{Replacement lemmas.} In this subsection, we prove several replacement lemmas under the longer time scaling $N^{2+\gamma}$, $\gamma > 0$.  

The following lemma states that in the subcritical  regime $0 \leq \theta < 1$, we could replace the occupation variable at the boundary sites with the corresponding particle density of reservoirs.

\begin{lemma}[Replacement lemma for the case $0 \leq \theta < 1$.]\label{lem:replace_sub}
	Suppose $0 \leq \theta < 1$. Then for any $t > 0$,
	\begin{equation*}
		\lim_{N \rightarrow \infty} \bb{E}^N_{\mu_N} \Big[\big| \int_0^t (\eta_{s}^N (1) - \alpha) \,ds\big| \Big] = 0.
	\end{equation*}
	The same result holds with $\eta_{s}^N(1)$ replaced with $\eta_{s}^N(N-1)$ and $\alpha$ with $\beta$.
\end{lemma}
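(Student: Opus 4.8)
The plan is to run the entropy inequality together with the Feynman--Kac formula, reducing the estimate to a variational problem over $\nu$-densities that is then controlled by Lemmas \ref{lem:dirichlet} and \ref{lem:boundary}; the one twist, necessary because of the longer time scaling, is that the parameter in the entropy inequality must be allowed to diverge with $N$.

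Fix a smooth profile $\lambda:[0,1]\to(0,1)$ equal to $\alpha$ in a neighborhood of $0$ and to $\beta$ in a neighborhood of $1$, write $\nu=\nu^N_{\lambda(\cdot)}$, and set $V=\eta(1)-\alpha$. Since $\lambda$ is bounded away from $0$ and $1$, the relative entropy obeys $H(\mu_N\,|\,\nu)\le C_0N$. For a parameter $B=B_N>0$ to be chosen, the entropy inequality applied to $BN\,|\int_0^t V(\eta^N_s)\,ds|$, together with $e^{|x|}\le e^{x}+e^{-x}$ and the Feynman--Kac formula, gives
\[
\E^N_{\mu_N}\Big[\Big|\int_0^t V(\eta^N_s)\,ds\Big|\Big]
\le \frac{C_0+\log 2}{B}
+\frac{t}{BN}\,\sup_{f}\Big\{BN\,\langle V,f\rangle_\nu+N^{2+\gamma}\,\langle L_N\sqrt f,\sqrt f\rangle_\nu\Big\},
\]
where the supremum is over all $\nu$-densities $f$; the bounds below are symmetric in the sign of $V$, so both signs are covered at once.

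For the first term inside the supremum I would invoke Lemma \ref{lem:boundary} with a free parameter $B'>0$, giving $BN\,\langle V,f\rangle_\nu\le C_\alpha BN B'+C_\alpha BN N^{\theta}(B')^{-1}D_{N,b}^\alpha(\sqrt f,\nu)$, and for the second term Lemma \ref{lem:dirichlet}$(i)$, where I retain only the left-boundary form and discard the remaining non-positive bulk and right-boundary forms, so that $N^{2+\gamma}\,\langle L_N\sqrt f,\sqrt f\rangle_\nu\le -N^{2+\gamma}D_{N,b}^\alpha(\sqrt f,\nu)+C N^{1+\gamma}$. Choosing $B'=C_\alpha B N^{\theta-1-\gamma}$ makes the two coefficients of $D_{N,b}^\alpha$ cancel exactly, so the supremum over $f$ is bounded by the $f$-independent quantity $C_\alpha^2 B^2 N^{\theta-\gamma}+C N^{1+\gamma}$. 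Substituting back yields
\[
\E^N_{\mu_N}\Big[\Big|\int_0^t V(\eta^N_s)\,ds\Big|\Big]
\le \frac{C_0+\log 2}{B}+t\,C_\alpha^2\,B\,N^{\theta-1-\gamma}+\frac{t\,C}{B}\,N^{\gamma}.
\]
Taking $B_N=N^{\gamma+(1-\theta)/2}$ drives the three terms to the orders $N^{-\gamma-(1-\theta)/2}$, $N^{-(1-\theta)/2}$ and $N^{-(1-\theta)/2}$, all of which vanish because $\theta<1$; the optimized bound is of order $N^{(\theta-1)/2}$. The right-boundary statement follows verbatim from the second inequality in Lemma \ref{lem:boundary}.

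The main obstacle is the error in Lemma \ref{lem:dirichlet}$(i)$, which the time scaling amplifies to $N^{2+\gamma}\mathcal{O}(N^{-1})=\mathcal{O}(N^{1+\gamma})$; this is far too large to be killed by the single factor $1/N$ that suffices in the classical diffusive ($\gamma=0$) replacement lemma. The resolution is to absorb the boundary Dirichlet form exactly against the gain from Lemma \ref{lem:boundary} rather than merely discarding it, and then to let $B=B_N\to\infty$ at the rate above so that the damping factor $N^{-\gamma}/B_N$ defeats the amplified error. The subcriticality $\theta<1$ is precisely the condition under which the resulting optimized bound tends to zero.
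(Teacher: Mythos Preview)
Your argument is correct and follows the same route as the paper: entropy inequality with a diverging parameter, Feynman--Kac, then Lemmas~\ref{lem:dirichlet}$(i)$ and~\ref{lem:boundary} with a choice of the free constant that annihilates the boundary Dirichlet form. The only cosmetic difference is your parameter choice $B_N=N^{\gamma+(1-\theta)/2}$ versus the paper's $A=N^\gamma\log N$; both work, and yours in fact yields the sharper polynomial rate $N^{(\theta-1)/2}$ in place of $1/\log N$.
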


\begin{proof} Let $\lambda: [0,1] \rightarrow (0,1)$ be a smooth density profile such that there exists a neighborhood of $0$ where $\gamma (\cdot)= \alpha$, and a neighborhood of $1$ where $\gamma (\cdot)= \beta$.  By the relative entropy inequality (cf. \cite[A.1.8]{klscaling}),  for any $A > 0$, the expectation in the lemma could be bounded from above by 
\begin{equation}\label{eqn3}
	\frac{H(\mu_N|\nu^N_{\lambda(\cdot)})}{AN} + \frac{1}{AN} \log \bb{E}^N_{\nu^N_{\lambda(\cdot)}} \Big[ \exp \Big\{AN \big| \int_0^t (\eta_{s}^N (1) - \alpha) \,ds \big|  \Big\} \Big],
\end{equation}
where for any probability  measures $\mu,\,\nu$ on $\Omega_N$ such that $\mu$ is absolutely continuous with respect to $\nu$, $H(\mu|\nu)$ is the relative entropy of $\mu$ with respect to $\nu$ defined as
\[H(\mu|\nu) = \sum_{\eta \in \Omega_{N}} \mu(\eta) \log \frac{\mu(\eta)}{\nu(\eta)}.\]
It is not hard to prove that $H(\mu_N|\nu^N_{\lambda(\cdot)}) \leq C_\lambda N$ for some finite constant $C_\lambda$.  Therefore, the first term in \eqref{eqn3} is bounded by $C_\lambda/A$. In the sequel, we shall take $A = A(N) \rightarrow \infty$ as $N \rightarrow \infty$. Whence the first term in \eqref{eqn3} vanishes in the limit.  Since $$\lim_{N \rightarrow \infty} r_N^{-1} \log (a_N + b_N) = \max \{\lim_{N \rightarrow \infty} r_N^{-1} \log a_N, \lim_{N \rightarrow \infty} r_N^{-1} \log b_N\}$$
for any positive sequences $\{a_N\}_{N \geq 1},\,\{b_N\}_{N \geq 1}$ and $\{r_N\}_{N \geq 1}$ such that $\lim_{N \rightarrow \infty} r_N = \infty$, we could remove the modulus inside the exponential for the second term in \eqref{eqn3}.  By the Feynman-Kac formula (cf. \cite[Lemma A.1.7.2]{klscaling}), the second term in \eqref{eqn3} is bounded by
\begin{equation}\label{eqn4}
t \sup_{f\, \text{density}} \Big\{  \sum_{\eta \in \Omega_{N}} (\eta (1) - \alpha) f (\eta) \nu^N_{\lambda(\cdot)}(\eta) + \frac{N^{1+\gamma}}{A} \<L_N \sqrt{f},\sqrt{f}\>_{\nu^N_{\lambda(\cdot)}} \Big\}.
\end{equation}
By Lemma \ref{lem:dirichlet} $(i)$,
\[\<L_N \sqrt{f},\sqrt{f}\>_{\nu^N_{\lambda(\cdot)}} \leq - D_{N,b}^\alpha (\sqrt{f}, \nu^N_{\lambda(\cdot)}) + \mathcal{O} (N^{-1}).\] 
Together with Lemma \ref{lem:boundary}, for any $B > 0$, we may bound \eqref{eqn4} by
\[t \sup_{f:\, \text{$\nu^N_{\lambda(\cdot)}$-density}} \Big\{  C_\alpha  B + C_\alpha  N^{\theta} B^{-1} D^\alpha_{N,b} (\sqrt{f}, \nu^N_{\lambda(\cdot)}) - \frac{N^{1+\gamma}}{A} D_{N,b}^\alpha (\sqrt{f}, \nu^N_{\lambda(\cdot)}) + \mathcal{O} (N^{\gamma}/A)\Big\}\]
for some finite constant $C_\alpha$. Taking $B = C_\alpha A N^{\theta - 1 - \gamma}$ and $A = N^\gamma \log N$, the above term is bounded by $t \big( C_\alpha^2 N^{\theta-1} \log N + \mathcal{O} (1/ \log N) \big)$, which converges to zero as $N \rightarrow \infty$ since $\theta < 1$. This concludes the proof.
\end{proof}

Let $m^N (\eta)$ be the average number of particles in the system 
\[m^N (\eta) = \frac{1}{N-1} \sum_{x=1}^{N-1} \eta (x).\]
Denote $m^N_t = m^N (\eta^N_t)$. The next result states that in the supercritical regime $\theta > 1$, we could replace the occupation variables at the boundary sites with the average number of particles in the system.

\begin{lemma}[Replacement lemma for the case $ \theta > 1$.]\label{lem:replace_sup} 
Suppose $\theta > 1$. Then for any $t > 0$,
\begin{align*}
	\lim_{N \rightarrow \infty}\, \bb{E}^N_{\mu_N} \Big[\big| \int_0^t (\eta_{s}^N(1) - m^N_{s}) \,ds\big| \Big] = 0.
\end{align*}
The same result holds with $\eta_{s}^N(1)$ replaced with $\eta_{s}^N(N-1)$ 
\end{lemma}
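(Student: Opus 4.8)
The plan is to mirror the proof of Lemma~\ref{lem:replace_sub}, but to use as reference measure the homogeneous product measure $\nu^N_\rho$ for a fixed $\rho \in (0,1)$ and to extract the \emph{bulk} Dirichlet form $D_{N,0}$ rather than the boundary one. Fix such a $\rho$. Since $\nu^N_\rho$ has marginals bounded away from $0$ and $1$, one has $H(\mu_N|\nu^N_\rho) \leq C N$ for a constant $C$ independent of $\mu_N$, so no hypothesis on the initial law is needed here. By the entropy inequality, for any $A > 0$ the expectation in the statement is at most
\[\frac{H(\mu_N|\nu^N_\rho)}{AN} + \frac{1}{AN}\log\bb{E}^N_{\nu^N_\rho}\Big[\exp\Big\{AN\big|\int_0^t(\eta^N_s(1) - m^N_s)\,ds\big|\Big\}\Big],\]
the first term being bounded by $C/A$. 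Removing the modulus as in Lemma~\ref{lem:replace_sub} and applying the Feynman--Kac formula, the second term is bounded by
\[t\sup_{f}\Big\{\<\eta(1) - m^N, f\>_{\nu^N_\rho} + \frac{N^{1+\gamma}}{A}\<L_N\sqrt{f},\sqrt{f}\>_{\nu^N_\rho}\Big\},\]
the supremum being over $\nu^N_\rho$-densities $f$. By Lemma~\ref{lem:dirichlet}~$(ii)$ and discarding the nonpositive boundary forms, $\<L_N\sqrt{f},\sqrt{f}\>_{\nu^N_\rho} \leq -D_{N,0}(\sqrt{f},\nu^N_\rho) + \mathcal{O}(N^{-\theta})$.

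The main new ingredient is an integration-by-parts estimate bounding $\<\eta(1) - m^N, f\>_{\nu^N_\rho}$ by $D_{N,0}$, playing the role that Lemma~\ref{lem:boundary} plays in the subcritical case. First telescope the discrepancy over the bonds:
\[\eta(1) - m^N(\eta) = \frac{1}{N-1}\sum_{x=1}^{N-1}\big(\eta(1) - \eta(x)\big) = \sum_{y=1}^{N-2} w_y\,\big(\eta(y) - \eta(y+1)\big), \qquad w_y := \frac{N-1-y}{N-1} \in (0,1).\]
For each bond the integrand $\eta(y)-\eta(y+1)$ is antisymmetric under the exchange $\eta\mapsto\eta^{y,y+1}$, while $\nu^N_\rho$, having constant density, is invariant under it. Writing $f=(\sqrt f)^2$, performing the change of variables $\eta\mapsto\eta^{y,y+1}$ and applying Young's inequality with a parameter $B>0$ then gives, for every $y$,
\[\big|\<\eta(y)-\eta(y+1), f\>_{\nu^N_\rho}\big| \leq B + \frac{C}{B}\sum_{\eta}\big(\sqrt{f}(\eta^{y,y+1}) - \sqrt{f}(\eta)\big)^2\nu^N_\rho(\eta),\]
where I used $|\eta(y)-\eta(y+1)| \leq 1$ together with $\sum_\eta f\,\nu^N_\rho = 1$. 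Summing over $y$, bounding $w_y \leq 1$, and recognizing the bond contributions of $D_{N,0}$ yields $\big|\<\eta(1) - m^N, f\>_{\nu^N_\rho}\big| \leq (N-2)B + C B^{-1}D_{N,0}(\sqrt{f},\nu^N_\rho)$.

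Combining the two estimates, the supremum above is bounded by
\[t\Big\{(N-2)B + \big(C B^{-1} - \tfrac{N^{1+\gamma}}{A}\big)D_{N,0}(\sqrt{f},\nu^N_\rho) + \mathcal{O}(N^{1+\gamma-\theta}/A)\Big\}.\]
Choosing $B = C A N^{-(1+\gamma)}$ makes the coefficient of $D_{N,0}$ nonpositive, so that term can be discarded, and the whole expression of the statement is then bounded by $C/A + t\,\mathcal{O}(A N^{-\gamma}) + t\,\mathcal{O}(N^{1+\gamma-\theta}/A)$. It suffices to pick $A = A(N) \to \infty$ with $N^{1+\gamma-\theta} \ll A \ll N^{\gamma}$, which is possible precisely because $\theta > 1$ forces $N^{1+\gamma-\theta} = o(N^{\gamma})$ (for instance $A = N^{\delta}$ with $\max(0,1+\gamma-\theta) < \delta < \gamma$); all three terms then vanish. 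The right-boundary statement follows identically, telescoping instead from $N-1$, i.e. $\eta(N-1) - m^N(\eta) = -\sum_{y} w'_y(\eta(y)-\eta(y+1))$ with $w'_y = y/(N-1)$. The only point genuinely beyond the subcritical argument, and the main obstacle, is this telescoping estimate: the single discrepancy $\eta(1)-m^N$ must be spread over all $N-2$ bonds, and one has to check that, after optimizing $B$, the resulting cost $(N-2)B$ is covered by the budget $N^{1+\gamma}/A$ coming from the bulk Dirichlet form exactly when $\theta>1$.
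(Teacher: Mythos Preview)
Your proof is correct and follows essentially the same route as the paper: entropy inequality with the homogeneous reference measure $\nu^N_\rho$, Feynman--Kac, the same telescoping of $\eta(1)-m^N$ over nearest-neighbor bonds, a Cauchy--Schwarz/Young bound producing the bulk Dirichlet form $D_{N,0}$, and Lemma~\ref{lem:dirichlet}\,$(ii)$. The only cosmetic differences are that you collapse the double sum into weights $w_y$ and let your parameter $B$ play the dual role to the paper's (so your $(N-2)B + CB^{-1}D_{N,0}$ matches the paper's $\tfrac{B}{2}D_{N,0} + N/B$), and you give a range $\max(0,1+\gamma-\theta)<\delta<\gamma$ for $A=N^\delta$ whereas the paper simply takes $A = N^\gamma/\log N$.
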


\begin{proof} The proof is similar to that of Lemma \ref{lem:replace_sub}, and we only sketch the proof here. Fix a constant $\rho \in (0,1)$.  By the relative entropy inequality (cf. \cite[A.1.8]{klscaling}),  for any $A > 0$, the expectation in the lemma could be bounded from above by 
\begin{equation}\label{eqn5}
	\frac{H(\mu_N|\nu^N_\rho)}{AN} + \frac{1}{AN} \log \bb{E}^N_{\nu^N_\rho} \Big[ \exp \Big\{ AN \big| \int_0^t (\eta_{s}^N(1) - m^N_{s}) \,ds\big| \Big\} \Big].
\end{equation}
As in Lemma \ref{lem:replace_sub}, the first term is bounded by $C/A$ for some finite constant $C$.   By the Feynman-Kac formula (cf. \cite[Lemma A.1.7.2]{klscaling}), the second term in \eqref{eqn5} is bounded by
\begin{equation}\label{eqn6}
t \sup_{f:\, \nu^N_\rho\text{-density}} \Big\{ \sum_{\eta \in \Omega_{N}} (\eta(1) - m^N (\eta)) f(\eta) \nu^N_\rho (\eta) + \frac{N^{1+\gamma}}{A} \<L_N \sqrt{f},\sqrt{f}\>_{\nu^N_{\rho}}  \Big\}.
\end{equation}
We may rewrite $\eta(1) - m^N (\eta)$ as a telescope sum
\[\frac{1}{N-1} \sum_{x=1}^{N-1} \sum_{y=1}^{x-1} (\eta(y) - \eta(y+1)).\]
Making the change of variables $\eta \mapsto \eta^{y,y+1}$,
\[\sum_{\eta \in \Omega_{N}} (\eta(1) - m^N (\eta)) f(\eta) \nu^N_\rho (\eta)  = \frac{1}{2(N-1)} \sum_{x=1}^{N-1} \sum_{y=1}^{x-1} \sum_{\eta \in \Omega_{N}}   (\eta(y) - \eta(y+1)) \big(f(\eta)- f(\eta^{y,y+1}) \big) \nu^N_\rho (\eta). \] 
By Cauchy-Schwarz inequality, for any $B > 0$, we may bound the last term by
\begin{align*}
&\frac{B}{4(N-1)} \sum_{x=1}^{N-1} \sum_{y=1}^{x-1} \sum_{\eta \in \Omega_{N}} \big(\sqrt{f}(\eta)- \sqrt{f}(\eta^{y,y+1}) \big)^2 \nu^N_\rho (\eta) \\
&\qquad \qquad + \frac{1}{4B(N-1)} \sum_{x=1}^{N-1} \sum_{y=1}^{x-1} \sum_{\eta \in \Omega_{N}} \big(\sqrt{f}(\eta)+ \sqrt{f}(\eta^{y,y+1}) \big)^2 \nu^N_\rho (\eta) \\
&\leq \frac{B}{2} D_{N,0} (\sqrt{f},\nu^N_\rho) + \frac{N}{B}.
\end{align*}
The last inequality follows from the basic inequality $(a+b)^2 \leq 2 (a^2 + b^2)$ and the fact that $f$ is a density with respect to $\nu^N_\rho$.  By Lemma \ref{lem:dirichlet} $(ii)$, 
\[\<L_N \sqrt{f},\sqrt{f}\>_{\nu^N_{\rho}} \leq  - D_{N,0} (\sqrt{f}, \nu^N_{\rho}) + \mathcal{O} (N^{-\theta}).\]
Whence, \eqref{eqn6} is bounded by
\[t \sup_{f:\, \nu^N_\rho\text{-density}} \Big\{  \frac{B}{2} D_{N,0} (\sqrt{f},\nu^N_\rho) + \frac{N}{B} - \frac{N^{1+\gamma}}{A} D_{N,0} (\sqrt{f}, \nu^N_{\rho}) + \mathcal{O} (N^{1+\gamma -\theta} /A).\Big\}\]
Taking $B = 2N^{1+\gamma}/A$ and $A = N^\gamma / (\log N)$, the above term is bounded by $1/ (2 \log N) + \mathcal{O} (N^{1-\theta} \log N)$, which converges to zero as $N \rightarrow \infty$ since $\theta > 1$. This concludes the proof.
\end{proof}

The next lemma concerns about the long time behavior of the average number of particles in the supercritical case.

\begin{lemma}[Replacement lemma for the average particle number.]\label{lem:replace_average}  Suppose $\theta > 1$.  Recall $m_0$ defined in \eqref{initial} is the average number of particles at the initial time. For any $t > 0$, 
\begin{enumerate}[(i)]
	\item if $0 \leq \gamma < \theta -1$, then
	\begin{equation*}
		\lim_{N \rightarrow \infty}\bb{E}^N_{\mu_N} \Big[\big| \int_0^t \big(m^N_{s} - m_0\big) \,ds\big| \Big] = 0,
	\end{equation*}
	
	\item if $\gamma =  \theta-1$, then
		\begin{equation*}
		\lim_{N \rightarrow \infty}\bb{E}^N_{\mu_N} \Big[\big| \int_0^t \big(m^N_{s} - m_s\big) \,ds\big| \Big] = 0,
	\end{equation*}
where \[m_s = \frac{\alpha+\beta}{2} + \Big(m_0 - \frac{\alpha+\beta}{2}\Big)e^{-2cs}.\]
	
	\item  if $\gamma >  \theta-1$, then
		\begin{equation*}
		\lim_{N \rightarrow \infty}\bb{E}^N_{\mu_N} \Big[\big| \int_0^t \big(m^N_{s} - (\alpha+\beta)/2\big) \,ds\big| \Big] = 0.
	\end{equation*}
\end{enumerate}
\end{lemma}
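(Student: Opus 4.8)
The plan is to derive a closed stochastic evolution equation for the scalar $m^N_t$ via Dynkin's formula, reduce its drift to a linear restoring term by means of the boundary replacement of Lemma \ref{lem:replace_sup}, and then solve the resulting linear equation with an integrating factor, reading off the three regimes from the limiting behaviour of the relaxation rate.

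First I would compute the action of the sped-up generator on $m^N$. Since exchanges conserve the particle number, $L_{N,0}m^N = 0$, while a direct computation using the identity $r_\alpha(\eta)(1-2\eta(1)) = \alpha - \eta(1)$ (and its analogue at $N-1$) gives
\[N^{2+\gamma}L_N m^N(\eta) = \kappa_N\big[(\alpha - \eta(1)) + (\beta - \eta(N-1))\big], \qquad \kappa_N := \frac{cN^{2+\gamma-\theta}}{N-1}.\]
Hence the compensated process $M^N_t := m^N_t - m^N_0 - \int_0^t N^{2+\gamma}L_N m^N(\eta^N_s)\,ds$ is a martingale whose predictable quadratic variation comes only from the two boundary flips, each changing $m^N$ by $\pm (N-1)^{-1}$; this yields $\langle M^N\rangle_t \leq CtN^{\gamma-\theta}$ and thus $\bb{E}^N_{\mu_N}[(M^N_s)^2] \leq CsN^{\gamma-\theta}$. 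The decisive point is that $\kappa_N \to 0$, $\kappa_N \to c$, or $\kappa_N \to \infty$ according to whether $\gamma < \theta-1$, $\gamma = \theta-1$, or $\gamma > \theta-1$, which is precisely the trichotomy of the statement.

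Next I would close the drift. Writing $(\alpha-\eta(1)) + (\beta - \eta(N-1)) = 2(\bar m - m^N_s) + (m^N_s - \eta(1)) + (m^N_s - \eta(N-1))$ with $\bar m := (\alpha+\beta)/2$, and setting $R^N_t := \int_0^t[(m^N_s - \eta^N_s(1)) + (m^N_s - \eta^N_s(N-1))]\,ds$, Lemma \ref{lem:replace_sup} gives $\bb{E}^N_{\mu_N}[|R^N_t|] \to 0$; since the Feynman--Kac bound in its proof is nondecreasing in the time horizon, one in fact gets $\sup_{s\leq t}\bb{E}^N_{\mu_N}[|R^N_s|] \to 0$. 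Thus, pathwise, with $v^N_s := m^N_s - \bar m$,
\[v^N_t = v^N_0 - 2\kappa_N\int_0^t v^N_s\,ds + \kappa_N R^N_t + M^N_t.\]
Putting $h_N(t) := \int_0^t v^N_s\,ds$, this reads $h_N'(t) + 2\kappa_N h_N(t) = v^N_0 + \kappa_N R^N_t + M^N_t$ with $h_N(0)=0$, and the integrating factor $e^{2\kappa_N s}$ gives
\[\int_0^t v^N_s\,ds = \int_0^t e^{-2\kappa_N(t-s)}\big(v^N_0 + \kappa_N R^N_s + M^N_s\big)\,ds.\]

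Finally I would read off the three cases. For $(iii)$, where $\kappa_N \to \infty$: the $v^N_0$ and $M^N$ contributions are multiplied by $\int_0^t e^{-2\kappa_N(t-s)}ds \leq (2\kappa_N)^{-1}\to 0$ (and $(2\kappa_N)^{-1}\sup_{s\le t}\bb{E}^N_{\mu_N}|M^N_s| \lesssim N^{(\theta-\gamma)/2-1}\to 0$, since $\gamma > \theta-1 > \theta-2$), while the $\kappa_N R^N$ term contributes at most $\tfrac12\sup_{s\leq t}\bb{E}^N_{\mu_N}|R^N_s|\to 0$, the factor $\kappa_N$ cancelling against $(2\kappa_N)^{-1}$; note no hypothesis on $\mu_N$ is used here, consistent with the statement. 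For $(i)$ and $(ii)$, where $c_\infty := \lim\kappa_N \in \{0,c\}$ is finite, I would instead compare with the solution $m_s$ of $\dot m_s = 2c_\infty(\bar m - m_s)$ with initial value $m_0$ (namely the constant $m_0$ when $c_\infty=0$, and $\bar m + (m_0-\bar m)e^{-2cs}$ when $c_\infty=c$); then $w^N_s := m^N_s - m_s$ solves the same linear equation with forcing $F^N_t = w^N_0 + 2(\kappa_N - c_\infty)\int_0^t(\bar m - m_s)\,ds + \kappa_N R^N_t + M^N_t$, so the integrating-factor solution gives $\bb{E}^N_{\mu_N}\big|\int_0^t w^N_s\,ds\big| \leq \tfrac{1}{2\kappa_N}(1 - e^{-2\kappa_N t})\sup_{s\leq t}\bb{E}^N_{\mu_N}|F^N_s|$, and every term of $\sup_{s}\bb{E}^N_{\mu_N}|F^N_s|$ vanishes (using \eqref{initial} for $w^N_0$, $\kappa_N \to c_\infty$ for the second term, boundedness of $\kappa_N$ with the replacement for $\kappa_N R^N$, and $\gamma-\theta<0$ for $M^N$). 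The main obstacle is the balance in case $(iii)$: the replacement error enters multiplied by the \emph{diverging} rate $\kappa_N$, and only the exponential relaxation supplied by the integrating factor compensates this blow-up; securing the uniform-in-$s$ form of Lemma \ref{lem:replace_sup} and verifying the martingale exponent $\gamma > \theta - 2$ are the steps that require care.
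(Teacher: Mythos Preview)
Your argument is correct and gives a unified, self-contained proof of all three regimes, but it is organized quite differently from the paper's. The paper treats the cases separately and rather bluntly: for $(i)$ it simply observes that both the martingale $\mathfrak m^N_t$ and the drift term $\kappa_N\int_0^t(\alpha-\eta(1)+\beta-\eta(N-1))\,ds$ are individually $o(1)$ (the latter being $\mathcal O(N^{1+\gamma-\theta})$), so $\sup_{s\le t}|m^N_s-m^N_0|\to 0$ in $L^1$, a statement stronger than the time-integrated one; for $(iii)$ it \emph{divides} the martingale identity by the diverging factor $\kappa_N\sim N^{1+\gamma-\theta}$, so the $m^N_t-m^N_0$ and martingale pieces disappear in the limit and one is left with $\int_0^t(\alpha+\beta-\eta(1)-\eta(N-1))\,ds\to 0$, after which Lemma~\ref{lem:replace_sup} replaces the boundary variables by $m^N_s$; and for $(ii)$ the paper simply cites \cite[Proposition~4.5]{tsunoda2020hydrostatic}. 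By contrast, you first close the drift into a linear term $-2\kappa_N v^N_s$ plus an error $R^N$ controlled by the replacement lemma, and then solve the resulting scalar linear equation exactly via an integrating factor. Your route is slightly longer for $(i)$ and $(iii)$ but has the virtue of actually \emph{proving} $(ii)$ rather than outsourcing it, and of making the mechanism---an Ornstein--Uhlenbeck-type relaxation at rate $2\kappa_N$---transparent. The price you pay is needing the replacement estimate uniformly over $s\in[0,t]$ and, in $(iii)$, having to check carefully that the exponential relaxation $e^{-2\kappa_N(t-s)}$ absorbs the diverging prefactor $\kappa_N$; you flag both points and they indeed go through.
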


\begin{proof}
The  statement $(ii)$ is a direct consequence of  \cite[Proposition 4.5]{tsunoda2020hydrostatic}. For the rest of the statements, consider the martingale $\mathfrak{m}^N_t$	defined as 
\begin{equation}\label{eqn2}
\mathfrak{m}^N_t := m^N_{t} -  m^N_{0} -  \int_0^t N^{2+\gamma} L_N m^N_{s }\,ds,
\end{equation}
whose quadratic variation at time $t$ is given by
\[\int_0^t \big\{N^{2+\gamma} L_N (m^N_{s })^2 - 2  m^N_s N^{2+\gamma} L_N m^N_s  \big\} ds.\]
A simple calculation shows that the quadratic variation of $\mathfrak{m}^N_t$ is bounded by $C N^{\gamma - \theta}$ for some finite constant $C$, and that the integral term in \eqref{eqn2} equals
\[\frac{c N^{2+\gamma-\theta}}{N-1} \int_0^t \big( \alpha - \eta_{s}^N(1) + \beta -  \eta_{s}^N(N-1) \big)\,ds.\]

If $0 \leq \gamma < \theta -1$,  by Doob's inequality, for any $T > 0$,
\[\lim_{N \rightarrow \infty} \E^N_{\mu_N} \Big[ \sup_{0 \leq t \leq T} \big(\mathfrak{m}^N_t\big)^2\Big] = 0. \]
The integral term  in \eqref{eqn2} is of order $N^{1+\gamma - \theta}$, which converges to zero as $N \rightarrow \infty$ uniformly in a bounded time interval. Therefore,
\[\lim_{N \rightarrow \infty} \bb{E}^N_{\mu_N} \big[\sup_{0 \leq t \leq T} |m^N_{t}-m_0| \big] = 0\] 
This proves the first statement $(i)$. 
	
If $\gamma > \theta - 1$, divided  by $N^{1+\gamma - \theta}$ in \eqref{eqn2}, we have
\[\lim_{N \rightarrow \infty} \E^N_{\mu_N} \Big[ \sup_{0 \leq t \leq T} \big(N^{\theta - \gamma - 1}\mathfrak{m}^N_t\big)^2\Big] = 0. \]
Since $m^N_t \leq 1$, by \eqref{eqn2}, 
\begin{equation*}
	\lim_{N \rightarrow \infty}\bb{E}^N_{\mu_N} \Big[ \big|\int_0^t \big( \alpha - \eta_{s}^N (1) + \beta -  \eta_{s}^N(N-1) \big)\,ds\big|\Big] = 0.
\end{equation*}
By Lemma \ref{lem:replace_sup}, we could replace $\eta_{s}^N(1)$ and $\eta_{s}^N (N-1)$ in the time integral with $m^N_{s}$. This concludes the proof.
\end{proof}

\section{Proof of Theorem \ref{thm:longtime}}\label{sec:proof}

In this section, we prove Theorem \ref{thm:longtime} depending on whether $0 \leq \theta < 1$, $\theta = 1$ or $\theta > 1$. For $H \in C^2 [0,1]$, consider the martingale defined as
\begin{equation}\label{matgale1}
	\begin{aligned}
		M^N_t (H) = \<\pi^N_t,H\> - \<\pi^N_0,H\> 
		-  \int_0^t N^{2+\gamma}  L_N \<\pi^N_s,H\>  ds,
	\end{aligned}
\end{equation}
whose quadratic variation at time $t$ is given by
\[ \int_0^t \big\{N^{2+\gamma}  L_N \<\pi^N_s,H\>^2 - 2\<\pi^N_s,H\> N^{2+\gamma}  L_N \<\pi^N_s,H\>\big\} ds.\]
Direct calculations show that the quadratic variation of $M^N_t (H)$ is  bounded by $C_H (N^{\gamma - 1} + N^{\gamma - \theta})$ for some finite constant $C_H$. Therefore, for any $T > 0$,
\[\lim_{N \rightarrow \infty} \E^N_{\mu_N} \Big[ \sup_{0 \leq t \leq T} \big(N^{-\gamma} M^N_t (H)\big)^2 \Big] = 0. \]
Since there is at most one particle per site, $|\<\pi^N_t,H\>| \leq ||H||_\infty$ uniformly in $t$, where $||H||_\infty := \max_{u \in [0,1]} |H(u)|$ is the uniform norm. Divided by $N^{\gamma}$ in \eqref{matgale1}, 
\[ \En{N^{2}  L_N \<\pi^N_s,H\>} = 0.
\]
Direct calculations yield that
\begin{equation*}
\begin{aligned}
	&N^{2}  L_N \<\pi^N_s,H\> =\<\pi^N_s,H^{\prime \prime}\>-\eta_{s}^N (N-1)  H^\prime (1)+\eta_{s}^N(1) H^\prime (0) \\
	&+ c N^{1-\theta}\left(\alpha-\eta_{s}^N(1)\right) H(0)+c N^{1-\theta}\left(\beta-\eta_{s}^N(N-1)\right) H(1) + \mathcal{O}(N^{-\theta} + N^{-1}).
\end{aligned}
\end{equation*}
Whence,
\begin{equation}\label{eqn1}
\begin{aligned}
\lim_{N \rightarrow \infty}  \bb{E}^N_{\mu_N} \Big[& \big| \int_0^t \Big\{ \<\pi^N_s,H^{\prime \prime}\>-\eta_{s}^N (N-1)  H^\prime (1)+\eta_{s}^N(1) H^\prime (0) \\
&+ c N^{1-\theta}\left(\alpha-\eta_{s}^N(1)\right) H(0)+c N^{1-\theta}\left(\beta-\eta_{s}^N(N-1)\right) H(1) \Big\} ds \big|\Big] = 0.
\end{aligned}
\end{equation}

\medspace

\subsection{The case $0 \leq \theta < 1$.} In this subsection, we prove Theorem \ref{thm:longtime} for the case $0 \leq \theta < 1$. Fix $G \in C[0,1]$. The main technique here is to find an appropriate function $H$ such that $H^{\prime \prime} = G$ on (0,1) and that $H(0) = H(1) = 0$.  With such a function $H$,  the second line in \eqref{eqn1} vanishes and the result follows by the corresponding replacement lemmas.

\begin{proof}[Proof of Theorem \ref{thm:longtime} in the case $0 \leq \theta < 1$.]
	For $G \in C[0,1]$, let 
	\begin{equation}\label{eqn:H}
		H (u) = H_G (u) = \int_0^u \int_0^v G(w) \,dw\,dv + u \int_0^1 (v-1) G(v)\,dv.
	\end{equation}
	It is easy to check that $H \in C^2 [0,1]$, $H^{\prime \prime} = G$ on $(0,1)$, and  that
	\begin{align*}
		H(0) = H(1) = 0,\quad  H^\prime (0) = \int_0^1 (u-1) G(u)\,du, \quad H^\prime (1) = \int_0^1 u G(u)\,du.
	\end{align*}
	Substituting the function $H$  into \eqref{eqn1}, 
	\begin{equation*}
		\En{  \Big\{\<\pi^N_s,G\>
			- \int_0^1 \big[ (\eta^N_{s}(N-1) - \eta^N_{s}(1))u + \eta^N_{s}(1)\big] G(u)\,du\Big\}} = 0.
	\end{equation*}
By Lemma \ref{lem:replace_sub}, we could replace $\eta^N_s (1)$ (resp. $\eta^N_s (N-1)$) with $\alpha$ (resp. $\beta$). This concludes the proof for the case $0 \leq \theta < 1$.
\end{proof}

\subsection{The case $\theta = 1$.} In this subsection, we prove Theorem \ref{thm:longtime} for the case $\theta = 1$.  Fix $G \in C[0,1]$. We need to find an appropriate function $H$ such that $H^{\prime \prime} = G$ on (0,1) and that the coefficients of $\eta (1)$ and $\eta(N-1)$ vanish in \eqref{eqn1}.  Note that in this case we do not need any replacement lemma.

\begin{proof}[Proof of Theorem \ref{thm:longtime} in the case $\theta = 1$.]
For $G \in C[0,1]$, let 
\begin{align*}
H(u) =    \int_0^1 \Big(\frac{v}{2+c}  - \frac{1+c}{c(2+c)} \Big)G(v)dv + u \int_0^1 \Big(\frac{cv}{2+c} -  \frac{1+c}{2+c}\Big) G(v)dv 
+ \int_0^u \int_0^v G(w) \,dw\,dv. 
\end{align*}
It is easy to check that $H \in C^2 [0,1]$, $H^{\prime \prime} = G$ on $(0,1)$, and  that
\begin{align*}
	H(0) =  \int_0^1 \Big(\frac{v}{2+c}  - \frac{1+c}{c(2+c)} \Big)G(v)dv, &\quad
	H(1) = \int_0^1 \Big(-\frac{v}{2+c}  - \frac{1}{c(2+c)} \Big)G(v)dv,\\
	H^\prime (0) = \int_0^1 \Big(\frac{cv}{2+c} -  \frac{1+c}{2+c}\Big) G(v)dv, &\quad
	H^\prime (1) =   \int_0^1 \Big(\frac{cv}{2+c} +  \frac{1}{2+c}\Big) G(v)dv.
\end{align*}
Taking the function $H$ into \eqref{eqn1} and calculating the coefficients of $\eta(1)$ and $\eta (N-1)$, we have 
\begin{align*}
- H^\prime (1) - c H (1) = 0, \quad H^\prime (0) - c H(0) = 0.
\end{align*}
The constant term  in \eqref{eqn1}  is given by
\[c \alpha H(0) + c \beta H(1) = \int_0^1 \Big(\frac{c(\alpha - \beta)}{2+c}v - \alpha  -  \frac{\beta - \alpha}{2+c}\Big) G(v)dv. \]
Whence, the integrand in \eqref{eqn1} is equal to
\[\<\pi^N_s,G\> - \int_0^1 \Big(\frac{c(\beta-\alpha)}{2+c}v + \alpha  +  \frac{\beta - \alpha}{2+c}\Big) G(v)dv. \]
By \eqref{eqn1}, the time integral of the above term converges in $L^1(\P^N_{\mu_N})$ to zero as $N \rightarrow \infty$. This concludes the proof for the case $\theta =1$.
\end{proof}

\subsection{The case $\theta > 1$.} In this subsection, we prove Theorem \ref{thm:longtime} for the case $\theta > 1$.  In this case, the last line in \eqref{eqn1} converges to zero as $N \rightarrow \infty$. Whence, we do not need special properties of the function $H$.

\begin{proof}[Proof of Theorem \ref{thm:longtime} in the case $\theta > 1$.]
For $G \in C[0,1]$, let 
	\begin{align*}
		H(u) =   \int_0^u \int_0^v G(w) \,dw\,dv. 
	\end{align*}
Taking the function $H$ into \eqref{eqn1} and by Lemma \ref{lem:replace_sup},
\[\En{\Big\{\<\pi^N_s,G\> - m^N_s (H^\prime (1) - H^\prime (0))\Big\}} = 0.\]	
It is easy to check
\[H^\prime (1) - H^\prime (0) = \int_0^1 G(u) du.\]
Therefore,
\[\En{\Big\{\<\pi^N_s,G\> - m^N_s \int_0^1 G(u) du\Big\}} = 0.\]
By Lemma \ref{lem:replace_average}, we conclude the proof for the case $\theta >1$.
\end{proof}

\medspace

\textbf{Acknowledgments.}  
Zhao thanks the financial support from the ANR grant MICMOV (ANR-19-CE40-0012) of the French National Research Agency (ANR).

\bibliographystyle{plain}
\bibliography{zhaoreference.bib}
\end{document}